\def \ex {\mathrm{ex}}
\def \rex {\mathrm{rex}}
\def \regex {\mathrm{regex}}
\theoremstyle{plain}
\newtheorem{theorem}{Theorem}[section]
\newtheorem{lemma}[theorem]{Lemma}
\newtheorem{conjecture}[theorem]{Conjecture}
\newtheorem{proposition}[theorem]{Proposition}
\theoremstyle{definition}
\newtheorem{claim}[theorem]{Claim}
\newcommand\cref[1]{Corollary~\ref{cor:#1}}
\title{Some exact results for regular Tur\'an problems}
\author{D\'aniel Gerbner$^1$ \and Bal\'azs Patk\'os$^{1,2}$ \and Zsolt Tuza$^{1,3}$ \and M\'at\'e Vizer$^1$ \\ \small $^1$ Alfr\'ed R\'enyi Institute of Mathematics\\ \small $^2$ Lab. of Combinatorial and Geometric Structures, Moscow Inst. of Physics and Technology\\ \small $^3$ Department of Computer Science and Systems
 Technology, University of Pannonia%, Hungarian Academy of Sciences
}
\date{}
\begin{document}

\maketitle

\begin{abstract}
As a variant of the famous Tur\'an problem, we study $\rex(n,F)$, the maximum number of edges that an $n$-vertex \textit{regular} graph can have without containing a copy of $F$. We determine $\rex(n,K_{r+1})$ for all pairs of integers $r$ and large enough $n$. For every tree $T$, we determine $\rex(n,T)$ for every $n$ large enough.
%that were unsolved before. 
%We determine the asymptotics of $rex(n,F)$ for a wide class of 3-chromatic graphs $F$ and we also settle the problem for trees.
\end{abstract}

\section{Introduction}

The Tur\'an number, $\ex(n,F)$ is the maximum number of edges that an $n$-vertex graph can have without containing a copy of $F$. Since the introduction of the problem by Tur\'an \cite{T}, the area attracted the attention of many researchers. The order of magnitude of $\ex(n,F)$ is known unless $F$ is bipartite. There have been lots of generalizations, strengthenings of results and notions related to the Tur\'an number. A recent one is the singular Tur\'an number due to Caro and Tuza \cite{ct}. In a follow-up paper \cite{gptv}, the present authors introduced the regular Tur\'an number $\rex(n,F)$, the maximum number of edges that a \textit{regular} graph on $n$ vertices can have without containing a copy of $F$. Sometimes it is more convenient to work with $\regex(n,F)$, the maximum number $r$ such that there exists an $r$-regular, $F$-free graph on $n$ vertices. Clearly, we have $2\,\rex(n,F)=n \cdot \regex(n,F)$. 

Recent papers by Caro and Tuza \cite{ct2} and independently by Cambie, de Verclos and Kang \cite{cdk} started investigating these parameters. Both papers determined $\rex(n,F)$ asymptotically if $F$ has chromatic number at least 4, and obtained asymptotic results for several graphs with chromatic number 3. However, not many exact results are known. By exact result, we mean that $\rex(n,F)$ is determined for \emph{every} $n$ large enough with respect to $F$. Note that in several cases, the exact value of $\rex(n,F)$ is an immediate corollary of knowing $\ex(n,F)$ for $n$ satisfying some divisibility condition, but not for other values of $n$.

Caro and Tuza \cite{ct2} determined $\rex(n,F)$ for $n$ large enough in case $F$ is $K_3$, $C_5$, or $K_4$ minus an edge. A theorem of Cambie, de Verclos and Kang \cite{cdk} implies an exact result for $n$ large enough for every 3-chromatic graph which contains a triangle and is \textit{edge-critical}, i.e.\ has an edge, such that deleting that edge we obtain a bipartite graph (using a result of Simonovits \cite{sim}, which determines the Tur\'an number of edge-critical graphs exactly, for $n$ large enough). In this paper we will obtain exact results for cliques and trees.

Let us write $\delta(n,r)$ to denote the minimum degree of the $r$-partite Tur\'an graph on $n$ vertices, i.e.\ if $n=qr+s$ with $0\le s<r$, then $\delta(n,r)=n-q$ if $s=0$ and $\delta(n,r)=n-q-1$ otherwise.

\begin{theorem}[Caro and Tuza \cite{ct2}]
   (i) If\/ $n$ is a multiple of\/ $r$, then\/ $\rex(n, K_{r+1}) = \ex(n, K_{r+1})$.

(ii) If\/ $n$ is not a multiple of\/ $r$, then\/ $\rex(n, K_{r+1}) = \ex(n, K_{r+1}) - \Theta(n)$ as\/
$n \rightarrow \infty$.

(iii) More exactly, if\/ $n = qr + s$ with\/ $1 \le s \le r -2$, and at least one of\/
$r - s$ or\/ $q$ is even, 

then\/
$\regex(n, K_{r+1}) = \delta(n,r)$.

(iv) For\/ $n = 3q + s$ with\/ $s = 0, 1, 2$ we have\/ $\regex(n, K_4) = 2q$.
\end{theorem}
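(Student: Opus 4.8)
The plan is to separate the trivial divisibility case, a uniform edge-counting upper bound, and explicit near-Tur\'an constructions, isolating the one genuinely exceptional case $r=3$, $s=2$. For (i), note that when $r\mid n$, writing $n=qr$, the Tur\'an graph $T_r(n)$ has all parts of size $q$, so every vertex has degree $n-q$: it is already $(n-q)$-regular and $K_{r+1}$-free. Since $\rex(n,K_{r+1})\le\ex(n,K_{r+1})$ always holds, equality follows.

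For the upper bounds behind (ii) and (iii), I would first record the exact edge count of the Tur\'an graph: subtracting the within-part edges from $\binom n2$ gives, for $n=qr+s$ with $0\le s<r$,
\[
e(T_r(n))=\frac{n(n-q)}2-\frac{s(q+1)}2 .
\]
When $s\ge 1$ the second term is positive, so $e(T_r(n))<\tfrac{n(n-q)}2$. Hence an $(n-q)$-regular graph has strictly more than $\ex(n,K_{r+1})$ edges and cannot be $K_{r+1}$-free; therefore $\regex(n,K_{r+1})\le n-q-1=\delta(n,r)$. Substituting into $2\,\rex=n\cdot\regex$ and comparing with the displayed value of $\ex$ gives a linear lower bound on the gap in (ii), while the constructions below bound it from above, yielding $\Theta(n)$.

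The matching lower bound in (iii) is constructive. In $T_r(n)$ the vertices of the $s$ parts of size $q+1$ already have degree $n-q-1$, whereas the $(r-s)q$ vertices in the $r-s$ parts of size $q$ have degree $n-q$. To make the graph $(n-q-1)$-regular I would delete a perfect matching among these high-degree vertices, using only edges running between distinct small parts. Such a matching exists exactly when $(r-s)q$ is even, i.e.\ when $r-s$ or $q$ is even, and when the complete $(r-s)$-partite graph on these vertices (parts of size $q$) has a perfect matching; the latter holds precisely because $r-s\ge 2$, equivalently $s\le r-2$, so no part exceeds half the vertices. Deleting edges preserves $K_{r+1}$-freeness, so the result is an $(n-q-1)$-regular $K_{r+1}$-free graph, giving $\regex(n,K_{r+1})\ge\delta(n,r)$.

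Finally, in (iv) the cases $s=0$ and $s=1$ follow from (i) and (iii) (note $r-s=2$ is even when $s=1$), both giving $\regex=2q$; the real work is $s=2$, where $n=3q+2$ and $\delta(n,3)=2q+1$, so the edge-counting bound only yields $\regex\le 2q+1$ and is \emph{not} tight. Here I would rule out a $(2q+1)$-regular $K_4$-free graph $G$ directly. For $q$ large its minimum degree $2q+1$ exceeds $\tfrac58 n$, so by the Andr\'asfai--Erd\H{o}s--S\'os theorem $G$ must be tripartite. Regularity forces every part to have size at most $n-(2q+1)=q+1$, and since the sizes sum to $3q+2$ this pins them to $(q+1,q+1,q)$; then each vertex of the two size-$(q+1)$ parts must be adjacent to everything outside its part, which in turn forces every vertex of the size-$q$ part to have degree $2(q+1)=2q+2$, a contradiction. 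Thus $\regex(n,K_4)\le 2q$, and a routine deletion of a degree-prescribed subgraph from $T_3(3q+2)$ — removing one edge at each vertex of the two size-$(q+1)$ parts and two at each vertex of the size-$q$ part — produces the $2q$-regular $K_4$-free graph witnessing the lower bound. I expect this $s=2$ upper bound to be the main obstacle, since it is exactly where counting fails and one must invoke a minimum-degree/stability structural theorem rather than mere edge comparison.
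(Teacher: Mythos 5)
This statement is quoted from Caro and Tuza \cite{ct2}; the paper under review gives no proof of it, so there is nothing internal to compare against line by line. Judged on its own terms, your argument for (i), for the upper bounds via the edge count $e(T_r(n))=\frac{n(n-q)}{2}-\frac{s(q+1)}{2}$, and for the matching-deletion construction in (iii) is correct, and your treatment of the genuinely exceptional case $r=3$, $s=2$ in (iv) is sound: Andr\'asfai--Erd\H{o}s--S\'os forces $3$-partiteness for $\delta(G)>\frac58 n$ (which holds for $q\ge 3$), the part sizes are then pinned to $(q+1,q+1,q)$, and the degree contradiction follows. It is worth noting that where this paper handles the analogous ``non-$r$-partite'' issue (in its own Theorem \ref{complete}) it uses Brouwer's edge bound (Theorem \ref{partite+}) rather than a minimum-degree stability theorem; either route works for (iv), and in fact Brouwer's bound already rules out a non-$3$-partite $(2q+1)$-regular graph for $q\ge 2$ by pure counting, so your invocation of AES is a legitimate but slightly heavier alternative.

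There is, however, one genuine gap: the lower-bound half of (ii). The claim $\rex(n,K_{r+1})=\ex(n,K_{r+1})-\Theta(n)$ requires, for \emph{every} $n$ not divisible by $r$, a regular $K_{r+1}$-free graph with $\ex(n,K_{r+1})-O(n)$ edges. You defer this to ``the constructions below,'' but your only construction is the perfect matching on the $(r-s)q$ maximum-degree vertices, which exists precisely under the hypotheses of (iii) (at least one of $r-s$, $q$ even, and $s\le r-2$). In the remaining cases --- $q$ and $r-s$ both odd, or $s=r-1$ --- no such matching exists, and indeed $\regex(n,K_{r+1})$ drops below $\delta(n,r)$ there; producing a $(\delta(n,r)-1)$- or $(\delta(n,r)-2)$-regular $K_{r+1}$-free graph in those cases is exactly the content of the constructions in Theorem \ref{complete} of this paper, which require removing Hamiltonian cycles (via Dirac's theorem) or alternate edges of them from the induced subgraphs on the minimum- and maximum-degree classes. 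You need at least a crude version of such a construction (any one losing only $O(1)$ in the degree suffices for the $\Theta(n)$ statement) to close (ii); as written, that half is asserted rather than proved.
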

    
    Our first result determines $\regex(n,K_{r+1})$ in all remaining cases.
    
    \begin{theorem}\label{complete}
        Let\/ $n=qr+s$ with\/ $r\ge 4$,  and\/ $q$ be large enough.
        
        (i) If\/ $q$ and\/ $r-s$ are odd, then\/ $\regex(n,K_{r+1})=\delta(n,r)-1$.
        
        (ii) If\/ $q$ and\/ $r$ are even with\/ $s=r-1$, then\/ $\regex(n,K_{r+1})=\delta(n,r)-2$.
        
        (iii) If\/ $q$ is even and\/ $r$ is odd with\/ $s=r-1$, then\/ $\regex(n,K_{r+1})=\delta(n,r)-1$.
    \end{theorem}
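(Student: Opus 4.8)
The plan is to treat the three parts through a common framework: an \emph{upper bound} forbidding any $K_{r+1}$-free regular graph of larger degree, and a matching \emph{construction}. Throughout write $d:=\delta(n,r)=n-q-1$ (valid since $s\ge 1$ in every case), and let $r'$ be a putative regular degree of a $K_{r+1}$-free graph $G$ on $n$ vertices. The cheap first ingredient is the Tur\'an bound: since $e(G)=nr'/2\le\ex(n,K_{r+1})=e(T(n,r))$ and the average degree of $T(n,r)$ is strictly below $n-q$ whenever $s\ge 1$, one always has $r'\le d$. The second ingredient, needed only for $s=r-1$, is the Andr\'asfai--Erd\H{o}s--S\'os theorem: for $q$ large one has $d>\tfrac{3r-4}{3r-1}\,n$, so any $K_{r+1}$-free graph of minimum degree at least $d$ is $r$-partite.

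For the upper bounds I would combine a parity observation with a structural lemma. The parity observation is just that an $r'$-regular graph on $n$ vertices needs $nr'$ even. Tabulating parities: in (i), from $q$ odd and $r-s$ odd one gets $n\equiv qr+s\equiv r+s\equiv 1\pmod 2$ and $d=n-q-1$ odd, so $nd$ is odd and $r'=d$ is impossible, leaving $r'\le d-1$; in (ii) one gets $n$ odd, $d$ even but $d-1$ odd, ruling out $r'=d-1$; in (iii) one gets $n$ even and $d$ odd, so parity alone excludes nothing at $d$ or $d-1$. The structural lemma disposes of the value $d$ when $s=r-1$: if $G$ were $d$-regular then, being $r$-partite with all classes of size at most $n-d=q+1$ and $\sum|V_i|=n=r(q+1)-1$, exactly one class has size $q$ and the rest size $q+1$; each vertex in a size-$(q+1)$ class is then adjacent to \emph{all} $n-q-1$ vertices outside its class, hence to every vertex of the size-$q$ class, which forces each small-class vertex to have degree $n-q>d$, a contradiction. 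Thus $r'\le d-1$ whenever $s=r-1$. Combining: (i) $r'\le d-1$; (ii) $r'\le d-1$ by the lemma and then $r'\le d-2$ by parity; (iii) $r'\le d-1$ by the lemma.

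For the constructions I would delete from $T(n,r)$ a sparse $r$-partite graph $H$ to equalize degrees. If the target is $d-t$, then a size-$(q+1)$ vertex must lose $t$ edges and a size-$q$ vertex must lose $t+1$ edges, so $H$ is required to be an $r$-partite graph (no edges inside a Tur\'an class) with class-degrees $t+1$ on the small classes and $t$ on the large ones; the remaining graph is a subgraph of a complete $r$-partite graph and so is automatically $K_{r+1}$-free. Here $t\le 2$, so $H$ has maximum degree at most $3$, and such a bounded-degree $r$-partite graph with the prescribed class-degrees exists for $q$ large whenever its degree sum is even. When there are at least two small classes (the generic subcase of (i)) one takes a $2$-factor on the small classes and a matching on the large classes; when $s=r-1$ forces a single small class (parts of (i), and all of (ii), (iii)), its $q$ vertices instead send their $t+1$ edges into the large classes, and the residual large-class degrees are completed by a matching (for $t=1$) or a bounded-degree graph (for $t=2$) among the large classes. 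In each case the degree sum is even \emph{precisely} under the stated parity hypotheses.

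The main obstacle I anticipate lies entirely on the construction side: realizing $H$ as an honest simple $r$-partite graph with the prescribed small class-degrees while respecting the partition, especially in the degenerate regime $s=r-1$ where there is a single small class and only $r-1$ large classes. One must verify that after the small class sends out its edges the residual degree sequence on the large classes is graphical within the remaining multipartite structure and has even sum, and the inequalities $2q$ or $3q\le(r-1)(q+1)$ (which hold since $r\ge 4$) guarantee enough room; this bookkeeping is exactly where the parity hypotheses of (i)--(iii) are used. By contrast the upper bounds, resting on the Tur\'an bound, the Andr\'asfai--Erd\H{o}s--S\'os theorem, and the short deficiency argument for $s=r-1$, should be routine once the parities are recorded.
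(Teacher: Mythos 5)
Your upper-bound half is complete and correct, and it takes a genuinely different route from the paper: where the paper invokes Brouwer's theorem (every non-$r$-partite $K_{r+1}$-free graph has at most $t(n,r)-\lfloor n/r\rfloor+1$ edges) to force $r$-partiteness in the $s=r-1$ cases, you use the Andr\'asfai--Erd\H{o}s--S\'os threshold; your computation checks out, since $\delta(n,r)-\tfrac{3r-4}{3r-1}n=\tfrac{q-2}{3r-1}>0$ for $q\ge 3$. Your structural step (one class of size $q$, the rest of size $q+1$, every large-class vertex adjacent to everything outside its class, hence the small class has degree $n-q>d$) is essentially the paper's closing argument for the $r$-partite case, phrased a bit more cleanly, and your parity bookkeeping in (i)--(iii) agrees with the paper's.

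The construction half, however, has a genuine gap beyond being a sketch. Your stated dichotomy for (i) is ``at least two small classes'' versus ``a single small class ($s=r-1$)'', and in the former you take a $2$-factor on the union of the small classes and \emph{a matching on the union of the large classes}. This fails when $s=1$: there is then exactly one large class, it is an independent set of $T(n,r)$, so no matching of $T(n,r)$ lives inside it, and the $q+1$ large-class vertices must instead each send their single deleted edge into the small classes, after which the affected small-class vertices need to lose only one more edge while the untouched ones need to lose two. This is precisely the subcase the paper treats separately (removing $q+1$ edges with distinct endpoints, then a $1$-factor on those endpoints, then a Hamiltonian cycle on the remaining maximum-degree vertices), and it is not covered by the degenerate regime you flag. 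More generally, in the degenerate cases you only assert that the residual degree sequence is ``graphical within the remaining multipartite structure''; the paper discharges exactly this obligation by spreading the deleted edges evenly over the other parts and then applying Dirac's theorem to the induced balanced multipartite subgraphs to extract Hamiltonian cycles (used whole or as perfect matchings). Without that verification --- and with the $s=1$ subcase misrouted --- the lower bounds are not established.
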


           %\begin{theorem}
           %Let $F$ be a graph with chromatic number 3 and let $k$ be the largest integer such that there exists a blow-up of $C_k$ that contains $F$. Then $rex(n,F)=(\frac{1}{k+2}+o(1))n^2$.
           %\end{theorem}

Let us turn our attention to trees. As Caro and Tuza \cite{ct} showed, for graphs with chromatic number at least $r+1\ge 4$, the difference between (the asymptotics of) ordinary and regular Tur\'an numbers is that some small changes are needed to the Tur\'an graph to obtain a slightly smaller, regular complete $r$-partite graph. For graphs with chromatic number 3, the situation is way more complicated in the regular case. We will show that for trees this is the simpler problem. The Erd\H os--S\'os conjecture \cite{ErSo} states that for any tree $T$ with $t$ vertices, we have $\ex(n,T)\le (t-2)n/2$, and it is still only known for some classes of graphs. However, $\rex(n,T)\le (t-2)n/2$ trivially follows from the well-known fact that a graph with minimum degree at least $t-1$ contains every tree on $t$ vertices. Note that if $(t-1)$ divides $n$, then this bound is sharp, as shown by $n/(t-1)$ vertex-disjoint copies of $K_{t-1}$. With the next theorem we determine $\rex(n,T)$ for every tree if $n$ is large enough. We say that a tree is \textit{almost-star} if in its proper 2-coloring, one of the classes consists of
at most two vertices.

\begin{theorem}\label{treethm} Let\/ $T$ be a tree on\/ $t$ vertices and\/ $n > n_0(T)$. Then
\begin{displaymath}
\regex(n,T)=
\left\{ \begin{array}{l l}
t-2 & \textrm{if\/ $t-1$ divides\/ $n$ or\/ $T$ is a star and\/ $t$ or\/ $n$ is even},\\
t-3 & \textrm{if\/ the above does not hold, and either\/ $t$ is odd, or\/ $t-2$ divides\/ $n$,}\\ & \textrm{ or\/ $T$ is a star or\/ $T$ is an almost-star and\/ $n$ is even}.\\
t-4 & \textrm{otherwise}.\\
\end{array}
\right.
\end{displaymath}
\end{theorem}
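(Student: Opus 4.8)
The plan is to establish matching upper and lower bounds, organized around a single structural principle: every component of an $r$-regular $T$-free graph is either \emph{small} (fewer than $t$ vertices, hence automatically $T$-free) or \emph{large} (at least $t$ vertices), and large components are severely constrained. Throughout I would use the fact quoted in the introduction that minimum degree $t-1$ forces every tree on $t$ vertices, giving the universal bound $\regex(n,T)\le t-2$.

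For the lower bounds I would build $T$-free $r$-regular graphs as disjoint unions of blobs. For a generic (non-special) tree the only admissible blobs are small: an $r$-regular graph on $m$ vertices with $r+1\le m\le t-1$. At degree $t-2$ the only size is $m=t-1$, namely a clique $K_{t-1}$, so such a graph exists exactly when $(t-1)\mid n$. At degree $t-3$ the available sizes are $m=t-2$ (a clique $K_{t-2}$) and, when $t$ is odd, $m=t-1$ (a $K_{t-1}$ minus a perfect matching); at degree $t-4$ one also gets $m=t-3$, and $m=t-2$ when $t$ is even, together with $m=t-1$ (a clique minus a $2$-factor). For a star $K_{1,t-1}$, being $T$-free is equivalent to maximum degree at most $t-2$, so blobs of arbitrary size are allowed and $\regex(n,T)$ is just the largest feasible regularity, namely $t-2$ when $(t-2)n$ is even (i.e.\ $t$ or $n$ even) and $t-3$ otherwise. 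For an almost-star whose small colour class has size $2$ (so the parts are $2$ and $t-2$) the balanced complete bipartite graph $K_{t-3,t-3}$ is a $(t-3)$-regular $T$-free blob of the even order $2(t-3)$, since $T$ cannot embed into a balanced bipartite graph whose sides have only $t-3<t-2$ vertices. I would then convert each ``available set of blob sizes'' into a realizability statement for all large $n$ by a Frobenius / Chicken--McNugget argument on the numerical semigroup they generate; for the relevant size sets this is governed purely by the parities in the theorem (for instance the sizes $t-2$ and $2(t-3)$ have gcd $2$ when $t$ is even, so they tile exactly the large even $n$), which yields the ``if'' directions of the three cases.

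The heart of the matter, and the step I expect to be the main obstacle, is the matching upper bound, which rests on structural lemmas for large components. Concretely I would prove: (L1) a connected $(t-2)$-regular graph on at least $t$ vertices contains every non-star tree on $t$ vertices; (L2) a connected $(t-3)$-regular graph on at least $t$ vertices contains every tree on $t$ vertices that is neither a star nor an almost-star; and (L3) a connected $(t-3)$-regular almost-star-free graph on at least $t$ vertices has even order (indeed it must be $K_{t-3,t-3}$). Lemma (L1) upgrades the greedy tree-embedding: the embedding can only fail when placing the last vertex, and connectivity together with having more than $t-1$ vertices always supplies an edge leaving a closed neighbourhood; the tight case is the near-star/broom, where one must choose the image of the subdividing vertex to be an endpoint of such an escaping edge.

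Given (L1)--(L3) the upper bounds follow. If $(t-1)\nmid n$, then not every $(t-2)$-regular component can be a $K_{t-1}$, so some component has at least $t$ vertices and, by (L1), contains $T$ whenever $T$ is not a star; hence $\regex(n,T)\le t-3$. In the ``otherwise'' case ($t$ even, $(t-1)\nmid n$, $(t-2)\nmid n$, $T$ not a star, and not an almost-star with $n$ even) suppose $G$ is $(t-3)$-regular and $T$-free: when $t$ is even a small component must be $K_{t-2}$, as the only other size $t-1$ would need $t-1$ even. If $T$ is not an almost-star, then by (L2) there is no large component, so $(t-2)\mid n$, a contradiction; if $T$ is an almost-star, then $n$ is odd here, while by (L3) every large component and every $K_{t-2}$ has even order, forcing $n$ even, again a contradiction. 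Thus $\regex(n,T)\le t-4$, and the star subcase is settled separately by the maximum-degree reformulation above. I expect (L2) and especially (L3) to be the delicate parts, since one must rule out exotic non-bipartite $(t-3)$-regular graphs that avoid a fixed almost-star, handle the near-star trees at the tight degree, and check that the Frobenius conditions reproduce the stated parities exactly; this case analysis is what I anticipate will dominate the write-up.
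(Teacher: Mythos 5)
Your proposal follows essentially the same route as the paper: your (L1) is Lemma~\ref{nemklikk}, your (L2) and (L3) together play the role of Lemma~\ref{tminusthree}, and your ``blob'' constructions plus the Frobenius/parity bookkeeping are exactly the paper's lower bounds and final case analysis. One concrete correction, though: the parenthetical strengthening of (L3) --- that a connected $(t-3)$-regular graph on at least $t$ vertices avoiding a fixed almost-star must be $K_{t-3,t-3}$ --- is false, so you would fail if you tried to prove it. For the particular almost-star $A_t$ (the one whose small colour class consists of a vertex of degree $2$ and a vertex of degree $t-3$), the complete $(k+1)$-partite graph with all parts of size $(t-3)/k$, for $k>1$ dividing $t-3$, is a connected, non-bipartite, $(t-3)$-regular, $A_t$-free graph, and for $k$ small relative to $t$ (e.g.\ $k=3$, $3\mid t-3$, $t\ge 12$) it has at least $t$ vertices; these are precisely the extra components appearing in the first bullet of Lemma~\ref{tminusthree}. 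Fortunately, only the even-order half of your (L3) enters the final parity argument, and that half survives: since $t$ is even, $k$ and $(t-3)/k$ are odd, so the order $(k+1)(t-3)/k$ is even. So the architecture is sound, but (L3) must be weakened to the order-parity statement (or to the full classification of Lemma~\ref{tminusthree}), and the ``delicate'' exotic graphs you anticipated having to rule out genuinely exist and must instead be catalogued.
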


We remark that for very small trees $T$, the above theorem gives $\regex(n,T)=0$. Indeed, while $\ex(n,F)=0$ if and only if $n=1$ or $F=K_2$, there are more cases when $\rex(n,F)=0$. One kind of example is when $n$ is small, for example $\rex(5,C_5)=0$, as the only regular graphs on 5 vertices are $K_5$ and $C_5$. The other, more interesting examples are graphs $F$ where $\rex(n,F)=0$ holds for infinitely many $n$. We characterize these graphs below.

\begin{proposition}\label{nulla}
For a given graph\/ $F$, we have $\rex(n,F)=0$ for infinitely many\/ $n$ if and only if every component of\/ $F$ is\/ $P_2$ or\/ $P_3$, except for at most one component that is\/ $P_4$.
\end{proposition}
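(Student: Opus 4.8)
The plan is to use that $\rex(n,F)=0$ holds precisely when the empty graph is $F$-free (so $F$ must have an edge) and every \emph{nonempty} regular graph on $n$ vertices contains $F$. I assume throughout, as is standard, that $F$ has no isolated vertices---otherwise the statement fails, e.g.\ $P_1\cup P_2$ lies in every nonempty graph on $\ge 3$ vertices, so its $\rex$ vanishes infinitely often despite having an isolated component. The first reduction is that $F$ must be a linear forest: the cycle $C_n$ is $2$-regular, and for $n>|V(F)|$ it has maximum degree $2$ and no proper cyclic subgraph, so if $F$ contained a vertex of degree $\ge 3$ or a cycle then $C_n$ would be $F$-free and $\rex(n,F)\ge n$. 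Hence I may write $F=P_{a_1}\cup\cdots\cup P_{a_m}$ with all $a_i\ge 2$, and set $t=|V(F)|$.

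For necessity I prove the contrapositive: if this linear forest is not of the prescribed shape, then $\rex(n,F)>0$ for all large $n$, via explicit $F$-free $2$-regular graphs. If some $a_i\ge 5$, a disjoint union of triangles and $4$-cycles (which exists for all $n\ge 6$, being a sum of $3$'s and $4$'s, and contains no path on $5$ vertices) is $F$-free. If instead at least two of the $a_i$ are $\ge 4$, I take one cycle $C_\ell$ with $\ell\in\{4,5,6\}$ chosen so that $\ell\equiv n\pmod 3$, together with $(n-\ell)/3$ triangles: since $\ell<8$ and triangles contain no $P_4$, this graph admits at most one vertex-disjoint copy of $P_4$, whereas embedding $F$ would require at least two. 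Either way $\rex(n,F)\ge n>0$ eventually, as needed.

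For sufficiency I fix $n$ coprime to $6$ and large, aiming to show every nonempty regular graph on $n$ vertices contains $F$; this yields infinitely many $n$ with $\rex(n,F)=0$. As $n$ is odd, a regular graph with an edge has even degree $r\ge 2$, which removes the $r=1$ (matching) obstruction automatically. For $r=2$ the graph is a union of cycles; since $3\nmid n$, not all of them are triangles, so some cycle has length $\ge 4$ to carry the at most one $P_4$-component, while the remaining components, each on $\le 3$ vertices, are packed into the abundant leftover space. For $r\ge 4$ I embed the components greedily: having used a vertex set $U$ with $|U|<t$, the graph $G-U$ still has more edges than vertices (only $O(t)$ edges are deleted from the $nr/2$ present), so by the Erd\H os--Gallai bound it contains a $P_4$, into which the next component---a sub-path of $P_4$---embeds.

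I expect sufficiency to be the crux, because one must control \emph{every} even degree $r\ge 2$ simultaneously. The two delicate points are the $r=2$ case, where the hypothesis $\gcd(n,6)=1$ is exactly what guarantees a cycle long enough to host the $P_4$-component (together with the routine but fiddly packing of the short components into a union of cycles), and the range of large $r$, where the minimum degree may lie well below $t$ so that a naive greedy embedding breaks down; the remedy is to replace degree conditions by the edge-count criterion ``more edges than vertices forces a $P_4$,'' which is robust under deleting the boundedly many already-used vertices. The necessity direction, by comparison, only requires exhibiting the correct $2$-regular construction for each residue class of $n$.
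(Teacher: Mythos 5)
Your proposal is correct in substance, and its sufficiency direction takes a genuinely different route from the paper's. The necessity direction is essentially the same as the paper's: $C_n$ against non-linear-forests, disjoint triangles and $C_4$'s against a component $P_k$ with $k\ge 5$, and one short cycle plus triangles against two $P_4$-components. For sufficiency, however, the paper does not case on the degree $r$: it first disposes of $P_4$-free $G$ (which forces $d=2$ and a union of triangles, impossible since $3\nmid n$), and otherwise fixes one $P_4$ together with a \emph{maximum} family of vertex-disjoint $P_3$'s among the remaining vertices, showing by a degree-counting argument that this family must have size $\Omega(n)$, whence $F$ embeds. You instead split into $r=2$ (a union of cycles, at least one of length $\ge 4$ because $3\nmid n$) and $r\ge 4$, where you re-find a fresh $P_4$ after each component is placed, via the Erd\H os--Gallai bound that a graph with more edges than vertices contains a $P_4$. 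Your version avoids the maximality-plus-counting step and is arguably cleaner; the paper's version avoids invoking Erd\H os--Gallai and treats all degrees uniformly. Your side remark that $F$ must be assumed to have no isolated vertices (else $F=P_1\cup P_2$ falsifies the literal statement) is a fair, if pedantic, observation.

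Two repairs are needed, neither fatal. First, in the $r\ge 4$ step the parenthetical ``only $O(t)$ edges are deleted'' is false: deleting $|U|<t$ vertices from an $r$-regular graph can delete up to $tr$ edges, and $r$ may be as large as $n-1$. The claim you actually need is still true, but by the correct count
\[
e(G-U)\;\ge\;\frac{nr}{2}-|U|\,r\;=\;r\left(\frac n2-|U|\right)\;\ge\;4\left(\frac n2-t\right)\;=\;2n-4t\;>\;n\;\ge\;|V(G-U)|
\]
once $n>4t$, which is exactly what Erd\H os--Gallai requires. Second, in the $r=2$ case the phrase ``packed into the abundant leftover space'' should be backed by the routine observation that, after the $P_4$ is placed, the remaining union of cycles and at most one path on $n-4$ vertices contains at least $(n-4)/5-1$ pairwise vertex-disjoint copies of $P_3$, which exceeds the number of components of $F$ for $n$ large, and each remaining component is a subgraph of $P_3$.
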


\section{Proofs}
We will use the following result of Brouwer \cite{br}. 

\begin{theorem}\label{partite+}
If\/ $H$ is a\/ $K_{r+1}$-free graph on\/ $n$ vertices which is not\/ $r$-partite, then\/ $H$ has at most\/ $t(n,r)-\lfloor n/r\rfloor+1$ edges, assuming\/ $n\ge 2r+1$.\end{theorem}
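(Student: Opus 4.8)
Write $t(n,r)$ for the number of edges of the Turán graph, so that Turán's theorem gives $e(H)\le t(n,r)$ for every $K_{r+1}$-free graph $H$. The plan is to prove the bound directly, by locating enough ``missing'' edges forced jointly by $K_{r+1}$-freeness and non-$r$-partiteness. First I would dispose of a degenerate case: if $H$ happens to be $K_r$-free then $e(H)\le t(n,r-1)$, and one checks that $t(n,r)-t(n,r-1)\ge \lfloor n/r\rfloor$ for $n\ge 2r+1$, so the desired bound already holds. Hence I may assume $H$ contains a copy of $K_r$.

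Next I fix a partition $V(H)=V_1\cup\cdots\cup V_r$ with the max-cut property (each vertex has at least as many neighbours outside its part as inside it); such a partition minimizes the number of interior edges $b:=\sum_i e(H[V_i])$. Let $m:=\sum_{i<j}\big(|V_i|\,|V_j|-e(V_i,V_j)\big)$ be the number of absent cross-pairs, where $e(V_i,V_j)$ counts the edges between $V_i$ and $V_j$. The identity $e(H)=\sum_{i<j}|V_i|\,|V_j|-m+b$ together with $\sum_{i<j}|V_i|\,|V_j|\le t(n,r)$ reduces everything to the \emph{structural} inequality $m-b\ge \lfloor n/r\rfloor-1$. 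This is not circular: $m$ and $b$ will be estimated by a local argument, not recovered from $e(H)$.

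The heart of the matter is that non-$r$-partiteness feeds into this inequality. If $b=0$ the parts are independent and $H$ is already $r$-partite, contrary to hypothesis; so there is an interior edge $xy$ with, say, $x,y\in V_1$. Since $H$ is $K_{r+1}$-free, there is no ``clique transversal'' $w_2\in V_2,\dots,w_r\in V_r$ that is pairwise adjacent and contained in $N(x)\cap N(y)$, for that would complete a $K_{r+1}$ on $\{x,y,w_2,\dots,w_r\}$. Since some part has size at least $\lceil n/r\rceil$ and, by the max-cut property, the parts $V_2,\dots,V_r$ induce a graph close to complete $(r-1)$-partite, such transversals are abundant unless $x$ and $y$ together avoid almost an entire part. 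Quantifying this, one extracts at least $\lfloor n/r\rfloor-1$ absent cross-pairs at $\{x,y\}$ beyond what the interior edges account for, giving $m-b\ge\lfloor n/r\rfloor-1$ and hence the theorem.

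The main obstacle is making this last estimate \emph{exact}. A naive ``block every transversal'' count tends to lose a constant factor (suggesting only about $\tfrac12\lfloor n/r\rfloor$ forced absences), so the delicate point is to charge absent cross-pairs against the interior edges $b$ simultaneously: one must use the max-cut optimality to ensure the endpoints of interior edges still have large cross-degree, and argue that the blocked part really has at least $\lfloor n/r\rfloor$ vertices (which may force choosing the partition more carefully, or proving near-balancedness), so that the net surplus lands precisely on $\lfloor n/r\rfloor-1$ and the floors match. I would also exhibit the extremal configuration — a Turán graph one of whose parts carries a short odd cycle — to confirm the constant is best possible, and dispose of the small range near $n=2r+1$ by a direct check, since that is exactly where the supply of clique transversals is weakest.
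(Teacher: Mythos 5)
First, a framing point: the paper does not prove this statement at all --- it is quoted as a theorem of Brouwer \cite{br}, with the extremal graphs characterized in \cite{ht,afgs,kp} --- so there is no in-paper argument to compare against, and your attempt has to stand on its own as a proof of a nontrivial cited result. Your skeleton is reasonable: the reduction to graphs containing $K_r$ is checkable (at $n=2r+1$ the difference $t(n,r)-t(n,r-1)$ equals $3\ge\lfloor n/r\rfloor$), and the identity $e(H)\le\sum_{i<j}|V_i|\,|V_j|-(m-b)\le t(n,r)-(m-b)$ for a max-cut $r$-partition correctly reduces everything to the inequality $m-b\ge\lfloor n/r\rfloor-1$.

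But that inequality is the entire content of the theorem, and it is exactly the step you do not prove: you label it ``the main obstacle,'' concede that the naive transversal-blocking count loses a factor of about $2$, and list the things ``one must'' do to repair it. Beyond being unfinished, the local argument as described has two structural problems. (1) The absences that block a clique transversal $w_2\in V_2,\dots,w_r\in V_r$ over a fixed interior edge $xy$ need not be incident to $x$ or $y$ at all: they can be missing pairs between $V_i$ and $V_j$ with $i,j\ge 2$, and the max-cut property does not make $H[V_2\cup\cdots\cup V_r]$ ``close to complete $(r-1)$-partite'' --- it only compares each vertex's degree into its own part with its degree into other parts, and says nothing about the global density of the cross bipartite graphs. (2) The quantity $b$ counts \emph{all} interior edges, so exhibiting $\lfloor n/r\rfloor-1$ absences at one interior edge does not give $m\ge b+\lfloor n/r\rfloor-1$ when $b$ is large; you would need a charging scheme that handles every interior edge simultaneously, and none is specified. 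Finally, the extremal configuration you propose (a Turán graph one of whose parts carries a short odd cycle) is not even $K_{r+1}$-free: a single edge inside a part of $T(n,r)$ together with one vertex from each other part already forms a $K_{r+1}$. The true extremal graphs require deleting cross edges as well (for $r=2$ they are suitable blow-ups of $C_5$), so your proposal cannot certify tightness either. As it stands this is a plausible plan, not a proof.
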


Hanson and Toft \cite{ht} also 
characterized the extremal graphs (the same result was independently obtained in \cite{afgs,kp}).

\begin{proof}[Proof of Theorem \ref{complete}]
Let us start with the upper bounds. Clearly, Tur\'an's theorem implies $\regex(n,K_{r+1})\le \delta(n,r)$. Observe that if the conditions of (i) hold, then $n$, $q$ and $\delta(n,r)$ are all odd, therefore there cannot exist a $\delta(n,r)$-regular graph on $n$ vertices and thus we have $\regex(n,K_{r+1})\le \delta(n,r)-1$.

In the cases (ii) and (iii), it is enough to show that there does not exist $\delta(n,r)$-regular $K_{r+1}$-free graphs on $n$ vertices. Indeed, that would imply $\regex(n,K_{r+1})\le \delta(n,r)-1$ and if the conditions of (ii) hold, then $n$ and $\delta(n,r)-1$ are both odd, therefore there is no $K_{r+1}$-free $(\delta(n,r)-1)$-regular graph on $n$ vertices.

So let $G$ be a regular $K_{r+1}$-free graph on $n=(q+1)r-1$ vertices. If $G$ is not $r$-partite, then Theorem \ref{partite+} implies $e(G)\le t(n,r)-\lfloor n/r\rfloor+1$. As $t(n,r)=n\delta(n,r)/2+q/2$, we obtain that  $e(G)$ is strictly smaller than $n\delta(n,r)/2$ if $q\ge 2$ and thus the regularity of $G$ must be smaller than $\delta(n,r)$. Finally, assume $G$ is $r$-partite and thus contained in a complete $r$-partite graph $K$. If $K$ is not a Tur\'an graph, then the minimum degree of $K$ is strictly smaller than $\delta(n,r)$ and we are done. If $G$ is a $\delta(n,r)$-regular subgraph of a Tur\'an graph $T(n,r)$, then $G$ must be obtained from $T(n,r)$ by removing edges only incident to vertices of maximum degree. But because of $s=r-1$, these vertices form an independent set, so such $G$ cannot exist.

\medskip

For the lower bounds we need constructions. As in \cite{ct2}, we will make use of Dirac's theorem that states that if the minimum degree of $G$ is at least $|V(G)|/2$, then $G$ contains a Hamiltonian cycle. In all cases, we will obtain our regular graph $G$ by removing edges from a Tur\'an graph $T(n,r)$. Note that $r-s$ is the number of parts in $T(n,r)$ that contain vertices of maximum degree, so the total number of such vertices is $q(r-s)$. Let $H_1$ and $H_2$ denote the subgraphs of $T(n,r)$ induced on the minimum and maximum degree vertices, respectively.

\medskip 

First we will provide good constructions for $(i)$. So let $q$ and $r-s$ be odd.

\smallskip 

$\bullet$ Assume first that $r-s,\ s>1$. Then both $H_1$ and $H_2$ satisfy the conditions of Dirac's theorem, so we can remove a Hamiltonian cycle from $H_2$ and every other edge of a Hamiltonian cycle of $H_1$ (note that the number of vertices in $H_1$ is $n-q(r-s)$, thus even by our assumptions) to make all degrees $\delta(n,r)-1$. 

\smallskip 

$\bullet$ If $s=1$, then let us remove an edge incident to the $q+1$ vertices of minimal degree each such that the other end vertices of these edges are all distinct and are as evenly distributed among the other parts of $T(n,r)$ as possible. As $q+1$ is even, $r-1\ge 3$, there exists a $1$-factor on these vertices, which we also remove. Finally, the remaining maximum-degree vertices span a subgraph of $H_2$ that satisfies the conditions of Dirac's theorem, and therefore we can remove a Hamiltonian cycle from that subgraph, to make all degrees $\delta(n,r)-1$. 

\smallskip 

$\bullet$ If $r-s=1$, then let us remove two edges incident to each maximum-degree vertex such that all $2q$ other endpoints are distinct and as evenly distributed among the other parts of $T(n,r)$ as possible. If $q$ is at least 2 (so large enough), then the subgraph of $H_1$ spanned by the remaining min-degree vertices satisfies the conditions of Dirac's theorem, so we can remove every other edge of a Hamiltonian cycle (as $n-q-2q$ is even in this case) to obtain a $(\delta(n,r)-1)$-regular graph. This completes the proof of (i).

\medskip 

To obtain a construction for (ii), it is enough to define a subset $E$ of the edges of $T(n,r)$ with every maximum-degree vertex of $T(n,r)$ being incident to 3 edges in $E$ and every minimum-degree vertex of $T(n,r)$ being incident to 2 edges of $E$. Indeed, then removing $E$ from $T(n,r)$ results in a $(\delta(n,r)-2)$-regular subgraph of $T(n,r)$. As $s=r-1$ implies that the maximum-degree vertices of $T(n,r)$ form an independent set, we must add $3q$ edges $e_1,e_2,\dots,e_{3q}$ to $E$ with $e_{3j-2},e_{3j-1},e_{3j}$ being incident to the $j$th maximum-degree vertex. The other endpoints of these edges are all distinct (this is possible as $(r-1)(q+1)\ge 3q$ holds by the assumption $r \ge 4$) and are as evenly distributed among the minimum-degree parts of $T(n,r)$ as possible. Let $V_1$ denote the set of minimum-degree vertices that are not incident to any of $e_1,e_2,\dots,e_{3q}$ and let $V_2$ denote the remaining minimum-degree vertices. It is easy to verify that both $T(n,r)[V_1]$ and $T(n,r)[V_2]$ satisfy the conditions of Dirac's theorem, so there exist Hamiltonian cycles $C_1$ and $C_2$. Adding all edges of $C_1$ to $E$ and every other edge of $C_2$ to $E$ ($3q$ is even) finishes the definition of $E$.

\medskip 

Finally, to prove (iii) let us suppose $q$ is even, $r$ is odd and $s=r-1$ hold. Similarly as in (ii), it is enough to define a subset $E$ of edges of $T(n,r)$ with every maximum-degree vertex of $T(n,r)$ being incident to $2$ edges in $E$ and every minimum-degree vertex of $T(n,r)$ being incident to 1 edge of $E$. This time  we must add $2q$ edges $e_1,e_2,\dots,e_{2q}$ to $E$ with $e_{2j-1},e_{2j}$ being incident to the $j$th maximum-degree vertex. The other endpoints of these edges are all distinct (this is possible as $(r-1)(q+1)\ge 2q$ holds by the assumption $r \ge 4$) and are as evenly distributed among the minimum-degree parts of $T(n,r)$ as possible. Let $V$ denote the vertices not incident to any of $e_1,e_2,\dots,e_{2q}$. Again, $T(n,r)[V]$ satisfies the conditions of Dirac's theorem, therefore we can add every other edge of the Hamiltonian cycle (note that $|V|=n-3q=r(q+1)-1-3q$ is even) to $E$. This completes the proof of (iii) and the theorem as well.
\end{proof}

Let us continue with the proof of Proposition \ref{nulla}.

\begin{proof}[Proof of Proposition \ref{nulla}] If $F$ has a vertex of degree at least three, then $C_n$ is a regular $F$-free graph for every $n>2$. If $F$ contains a cycle $C_k$, then again $C_n$ is a regular $F$-free graph for $k\neq n>2$. If $F$ contains $P_5$, then we write $n=4a+3b$ (every $n>5$ can be written this way) and take $a$ copies of $C_4$ and $b$ copies of $C_3$ to obtain a regular $F$-free graph on $n$ vertices. Therefore, if $\rex(n,F)=0$ for infinitely many $n$, then every component of $F$ is $P_2$, $P_3$ or $P_4$. If there are at least two components that are $P_4$'s, then we write $n>2$ as $3a+p$ with $0\le p<3$, and take $a-1$ copies of $C_3$ and a $C_{3+p}$ to obtain a regular $F$-free graph on $n$ vertices.

Now assume every component of $F$ is $P_2$ or $P_3$, except for at most one which is $P_4$. Let us assume $n$ is large enough, not divisible by 2 or 3, and let $G$ be a $d$-regular $F$-free graph on $n$ vertices. Observe that $d>1$ because of the parity of $n$.

Let us assume first that $G$ does not contain a $P_4$. Then $d\le 2$, thus $d=2$, and $G$ is a union of cycles. Moreover, the length of these cycles is at most three, thus $G$ is the union of triangles, which contradicts our assumption on $n$.

Now assume that there is a copy of $P_4$ in $G$ and let $k$ be the largest integer such that there are $k$ vertex-disjoint copies of $P_3$ in the other $n-4$ vertices of $G$. Let $A$ be the set of the $3k+4$ vertices in those $k$ copies of $P_3$ and one copy of $P_4$. Then for every vertex $v$ in $V(G)\setminus A$, at least $d-1$ edges incident to $v$ go to $A$. In particular, at least half the edges are incident to vertices in $A$, which by the regularity of $G$ would mean that there are at most $6k+8$ vertices. Now let $\ell$ be the number of components of $F$. If $n>6\ell+8$, then $G$ contains a $P_4$ and more than $\ell$ copies of $P_3$ vertex-disjointly, thus contains $F$, a contradiction which finishes the proof.
\end{proof}

Before proving Theorem \ref{treethm}, we need some lemmas.

\begin{lemma}\label{nemklikk}
For any non-star tree\/ $T$ on\/ $t$ vertices, the only\/ $T$-free\/ $(t-2)$-regular graphs are the vertex-disjoint unions of copies of\/ $K_{t-1}$.
\end{lemma}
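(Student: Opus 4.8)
The plan is to prove both inclusions. First I would record the easy direction: a disjoint union of copies of $K_{t-1}$ is $(t-2)$-regular, and since each component has only $t-1<t$ vertices it cannot contain the $t$-vertex tree $T$, so such graphs are $T$-free. For the converse, let $G$ be a $T$-free $(t-2)$-regular graph and let $C$ be one of its components. Being connected and $(t-2)$-regular, $C$ has at least $t-1$ vertices, and if it has exactly $t-1$ then every vertex is joined to all others, i.e.\ $C=K_{t-1}$. Hence it suffices to rule out components with $|V(C)|\ge t$, and the whole lemma reduces to the embedding statement: every connected $(t-2)$-regular graph $C$ on at least $t$ vertices contains $T$. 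Here I would use the elementary reformulation that a tree on $t$ vertices is a star if and only if it has a vertex of degree $t-1$; thus ``$T$ non-star'' is exactly ``$\Delta(T)\le t-2$'', which is what makes the embedding degree-feasible.

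For the embedding itself, since $T$ is not a star it has two leaves $\ell_1,\ell_2$ with distinct neighbours $p_1\neq p_2$. Let $T''=T-\{\ell_1,\ell_2\}$, a tree on $t-2$ vertices. Because $\delta(C)=t-2\ge |V(T'')|-1$, a greedy vertex-by-vertex embedding (processing $T''$ in an order in which every vertex but the first has a unique earlier neighbour) places $T''$ into $C$: at each step at most $t-3$ vertices have been used, while the parent's image has $t-2$ neighbours, so a free neighbour always remains. Write $A''$ for the image, so $|A''|=t-2$, and set $u_i=\phi(p_i)$. The point of removing \emph{two} leaves is the strict inequality $\deg_C(u_i)=t-2>t-3=|A''|-1$: each $u_i$ therefore has at least one neighbour outside $A''$. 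If one can pick external neighbours $x_1\in N(u_1)\setminus A''$ and $x_2\in N(u_2)\setminus A''$ with $x_1\neq x_2$, then putting $\phi(\ell_i)=x_i$ extends the embedding to all of $T$, and we are done.

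The hard part is the degenerate case where this choice is impossible, namely $N(u_1)\setminus A''=N(u_2)\setminus A''=\{z\}$ for a single vertex $z$. Then each $u_i$ is adjacent to every other vertex of $B:=A''\cup\{z\}$, a set of $t-1$ vertices, and has no neighbour outside $B$. I expect this to be the main obstacle, and I do not think it can be dispatched by simply re-embedding $T$ inside $B$ together with one extra vertex: although $B$ contains universal vertices, the other vertices of $B$ may induce almost no edges (their degree is spent on neighbours outside $B$), so $B$ need not even contain a spanning path. The correct fix must exploit those outside neighbours globally. Since $|V(C)|\ge t>|B|$ and $C$ is connected, some vertex $b\in B\setminus\{u_1,u_2\}$ has a neighbour $w\notin B$, and I would use a rotation/augmentation argument along a path from $w$ back into $B$ to re-route one of the two leaf-attachments to a second external vertex, thereby restoring $x_1\neq x_2$.

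Finally, I would isolate a clean special case to motivate and anchor that rotation: if $T$ has only two leaves then $T=P_t$, and then the desired conclusion $P_t\subseteq C$ is \emph{already} the classical strengthening that a connected graph of minimum degree $\delta$ contains a path on $\min(2\delta+1,n)$ vertices (here $\min(2t-3,n)\ge t$). So one may assume $T$ has at least three leaves, which gives additional freedom in the initial choice of the two removed leaves and in the rotation. Making the rotation precise while respecting the bounded degrees $\Delta(T)\le t-2$ is, I expect, where the genuine work of the proof lies.
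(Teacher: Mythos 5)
Your reduction to ``every connected $(t-2)$-regular graph on at least $t$ vertices contains $T$'' is fine, and the greedy embedding of $T''=T-\{\ell_1,\ell_2\}$ is correct. But the proof is not complete: the degenerate case you identify --- where $N(u_1)\setminus A''=N(u_2)\setminus A''=\{z\}$ for a single vertex $z$ --- is exactly the case that carries the whole difficulty, and you leave it with only the hope of a ``rotation/augmentation argument'' that is never specified. Your own analysis shows why this is not routine: in that configuration $u_1$ and $u_2$ are joined to all of $B=A''\cup\{z\}$, but the remaining $t-3$ vertices of $B$ may each have up to $t-4$ neighbours outside $B$, so $B$ can be internally very sparse and there is no obvious way to re-route the embedding inside it; making a rotation argument work while respecting $\Delta(T)\le t-2$ is genuine unfinished work, not a detail. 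As it stands the argument proves the lemma only in the non-degenerate case.

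The paper avoids this obstacle by a different setup that is worth internalizing. Instead of removing two leaves, it first finds in $G$ (assumed connected and not a clique) an \emph{induced} path $a\,b\,c$ with $ac\notin E(G)$ --- if no such triple exists, adjacency is an equivalence relation and the component is a clique, hence $K_{t-1}$. It then takes in $T$ a path $u\,v\,w\,z$ with $u$ a leaf, deletes only the single leaf $u$, maps $v\mapsto a$, $w\mapsto b$, $z\mapsto c$, and greedily embeds the rest of $T'=T-u$. At the final step, $a$ has degree $t-2$ and there are $t-2$ other embedded vertices, but one of them, $c$, is guaranteed \emph{not} to be a neighbour of $a$; hence at most $t-3$ of $a$'s neighbours are occupied and $u$ can always be attached. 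Anchoring the embedding on a non-edge of $G$ from the start is what eliminates your degenerate case entirely; without that idea (or a worked-out rotation argument), the proposal has a genuine gap.
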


\begin{proof} Let $G$ be a connected component of a $T$-free $(t-2)$-regular graph. We show that $G$ is a clique. If $G$ does not contain three vertices $a,b,c$ such that $ab$ and $bc$ are edges of $G$, but $ac$ is not, then either $G$ is a matching (that is only possible if it contains just one edge as $G$ is connected) or being adjacent is an equivalence relation, $G$ is a clique and we are done.
Assume for a contradiction that $a,b,c$ are vertices of $G$ as described above. $T$ contains a $P_4$ with vertices $u,v,w,z$ in this order such that $u$ is a leaf. Let $T'$ be the tree we obtain by deleting $u$ from $T$. First we will embed $T'$ into $G$ in a special way. 

Let us map $v$ to $a$, $w$ to $b$, and $z$ to $c$. We greedily embed the remaining vertices of $T'$ one by one into $G$. We always pick a vertex $x$ that is a leaf of the current, always increasing tree. This is doable, as the already embedded neighbor of $x$ has degree $t-2$ in $G$, and there are at most $t-2$ vertices of $T'$ already embedded into $G$, thus we can pick a new vertex for $x$. Finally, we embed $u$. Its only neighbor $v$ is already embedded into $a$. There are $t-2$ other vertices of $T'$ already embedded, and the degree of $a$ is $t-2$. However, $c$ is not adjacent to $a$ and is among the already embedded vertices, thus $a$ has a neighbor that is not in the copy of $T'$ we found in $G$. We embed $u$ into that neighbor to obtain a copy of $T$, a contradiction that finishes the proof.
\end{proof}

We need another lemma which describes the $T$-free $(t-3)$-regular graphs. It is similar to the above lemma, and its proof is also similar, but it is more involved.

We say that a tree is \textit{almost-star} if in its proper 2-coloring, one of the classes consists of at most two vertices. We denote by $A_t$ the almost-star on $t$ vertices where the class consisting of two vertices has a vertex of degree 2 and a vertex of degree $t-3$. We say that a tree is a \textit{double-star} if it has exactly two vertices of degree greater than 1.

\begin{lemma}\label{tminusthree}
Let\/ $t$ be even,\/ $T$ be a non-star tree on\/ $t$ vertices, and\/ $G$ be a\/ $(t-3)$-regular\/ $T$-free graph. Then there are three possibilities.

$\bullet$ If\/ $T=A_t$, then\/ $G$ is the vertex-disjoint union of copies of\/ $K_{t-2}$ and\/ $K_{t-3,t-3}$ and for integers\/ $k$ that divide\/ $t-3$, the complete\/ $(k+1)$-partite graph with parts of size\/ $(t-3)/k$.

$\bullet$ If\/ $T$ is an almost-star different from\/ $A_t$, then\/ $G$ is the vertex-disjoint union of copies of\/ $K_{t-2}$ and\/ $K_{t-3,t-3}$.

$\bullet$ If\/ $T$ is not an almost-star, then\/ $G$ is the vertex-disjoint union of copies of\/ $K_{t-2}$.
%For even $t$ and any tree $T$ on $t$ vertices that is not an almost-star, the only $T$-free $(t-3)$-regular graphs are the vertex-disjoint unions of copies of $K_{t-2}$. If $T$ is an almost-star, but not a star, then the only $T$-free $(t-3)$-regular graphs are the 
\end{lemma}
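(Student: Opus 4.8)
The plan is to adapt the strategy of \lref{nemklikk} to the $(t-3)$-regular setting. Let $G$ be a connected component of a $(t-3)$-regular $T$-free graph; I will classify the possible $G$. The guiding idea from the previous lemma is that the obstruction to embedding $T$ comes from local non-adjacency patterns, so I would again look for a short induced path and try to build $T$ greedily around it. Since $G$ is now $(t-3)$-regular rather than $(t-2)$-regular, there is exactly one ``extra'' unit of slack missing, which is precisely what allows the new extremal examples $K_{t-3,t-3}$ and the complete multipartite graphs to appear; the analysis must detect exactly when this slack is or is not enough.

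First I would record that each component $G$ has $t-2$ vertices if it is a clique $K_{t-2}$, so the cliques always appear and never contain $T$ (too few vertices). The substance is to show these, together with $K_{t-3,t-3}$ and (in the $A_t$ case) the balanced complete $(k+1)$-partite graphs, are the \emph{only} components. As before, I would examine whether $G$ contains an induced path on a few vertices. If $G$ has no induced $P_3$ (no $a,b,c$ with $ab,bc$ edges but $ac$ a non-edge), then adjacency is an equivalence relation and $G$ is a clique; regularity forces $G=K_{t-2}$. So I may assume such $a,b,c$ exist, and attempt the greedy embedding of $T$. Pick a leaf $u$ of $T$ whose neighbour $v$ lies on a $P_4$ $u,v,w,z$; map $v,w,z$ onto $a,b,c$ and greedily extend to $T'=T-u$ exactly as in \lref{nemklikk}. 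The embedding of $T'$ goes through whenever at each step a fresh neighbour is available, which now requires $\deg(x)=t-3$ to beat the number of already-placed vertices; the count is tighter by one than before, and tracking this tightness is where the three cases diverge.

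The heart of the argument, and the main obstacle, is the final step of placing $u$: its image must be a neighbour of $a$ outside the copy of $T'$. With degree $t-3$ and $t-2$ vertices of $T'$ already placed, the naive count no longer guarantees such a neighbour, and whether one exists depends on the fine structure of $T$ and of $G$ near $a$. I expect this is exactly the point that separates almost-stars from non-almost-stars: if $T$ is not an almost-star, the freedom in choosing the embedding (both which induced path to use and how to route the greedy steps) should always furnish the needed neighbour, forcing $G$ to contain no induced $P_3$ pattern that is not ``clique-like,'' hence $G=K_{t-2}$; if $T$ is an almost-star, the embedding can fail in a controlled way, and the failure pins down $G$ to be complete multipartite. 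I would therefore split into cases on whether $T$ is an almost-star, and within the almost-star case on whether $T=A_t$.

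For the almost-star cases I would argue structurally. Suppose $T$ is an almost-star, so one colour class $S$ of its proper $2$-colouring has $|S|\le 2$; since $T$ is a non-star, $|S|=2$. Then embedding $T$ amounts to finding two ``centre'' vertices whose neighbourhoods cover the required leaves, so $T$-freeness of $G$ becomes a condition on how neighbourhoods of adjacent vertices in $G$ overlap. I would show this forces $G$ to be complete multipartite: any induced $P_3$ together with $(t-3)$-regularity and $T$-freeness must collapse $G$ into parts of equal size, and counting degrees in a complete $m$-partite graph on $N$ vertices with equal parts gives $(t-3)$-regularity iff $N-N/m=t-3$, i.e. $N=(t-3)\cdot m/(m-1)$, which is integral only for $m=2$ (giving $K_{t-3,t-3}$) or, in the special case governed by the extra structure of $A_t$, for the multipartite graphs with parts of size $(t-3)/k$ described in the statement. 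The distinction between $A_t$ and other almost-stars is that the degree-$2$, degree-$(t-3)$ split of $A_t$ permits these extra balanced multipartite hosts to avoid a copy of $A_t$, whereas a general almost-star forbids all but $m=2$. Verifying that $A_t$ is genuinely absent from these multipartite graphs, and present in all others, is the delicate bookkeeping I would reserve for the full write-up; it is the crux that makes the three-way classification exact.
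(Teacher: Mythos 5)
Your outline has the right overall shape---reduce to a connected component, split on whether $T$ is an almost-star, and classify the complete multipartite hosts---but at each of the three decisive points you defer to an expectation rather than an argument, and in one place the computation you do offer is wrong. The central difficulty, which you correctly identify (placing the last leaf $u$ when $t-2$ vertices of $T'=T-u$ are already embedded and $f(v)=a$ has only $t-3$ neighbours), is not resolved by ``the freedom in choosing the embedding.'' A single induced $P_3$ gives $a$ only \emph{one} known non-neighbour among the embedded vertices, which is exactly one short of what the count requires. The paper's proof needs a genuinely stronger starting configuration: Claim~\ref{elso} produces a $P_4$ $abcd$ with \emph{both} $ac$ and $ad$ non-edges \emph{and} with no $K_{t-6}$ in the common neighbourhood of $a,b,c,d$; establishing this claim (and ruling out the disjoint-union-of-cliques degenerate case inside it) is the main new idea relative to Lemma~\ref{nemklikk}, and your proposal contains no substitute for it. Moreover, the greedy scheme built on a $P_4$ with a leaf endpoint cannot even get started for double-stars, whose longest path is a $P_4$ but which may or may not be almost-stars; the paper handles them by a separate argument showing that otherwise the neighbourhoods of adjacent vertices nearly coincide and then deriving a parity contradiction ($G[A]$ would be $(t-5)$-regular on $t-3$ vertices, both odd). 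Your proposal is silent on this case.

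The second concrete error is in the multipartite classification. You claim $N=(t-3)\,m/(m-1)$ ``is integral only for $m=2$,'' but it is integral precisely when $m-1$ divides $t-3$, which is what produces the entire family of balanced $(k+1)$-partite graphs in the statement. Consequently, integrality cannot be what separates $A_t$ from the other almost-stars: all of these multipartite graphs exist as $(t-3)$-regular graphs for \emph{every} $T$. What must actually be proved is an embedding statement --- that every almost-star other than $A_t$ embeds into each balanced $(k+1)$-partite graph with $k\ge 2$ (the paper does this with an explicit assignment $f(x)=v^1_1$, $f(y)=v^2_1$, $f(z)=v^3_1$ and a count showing $t-3-p'-q'\le (k-1)(t-3)/k-1$), while $A_t$ does not embed because its degree-$(t-3)$ centre would force the non-neighbours of $f(x)$ into a single part, where they are in fact adjacent. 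You flag this as ``delicate bookkeeping'' to be reserved for the write-up, but it, together with Claim~\ref{elso} and the double-star parity argument, \emph{is} the proof; without these three components the proposal does not establish the lemma.
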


\begin{proof} %\vm{én erről az egészrol azt gondolom, hogy valahogy jobban beagyazas nyelven kellene leirni és nem by contradiction. Ha ez a gráf, bele tudunk agyazni ezt meg azt. }
Let us assume $G$ is connected, but not a clique. Observe first that the case $t<6$ is trivial. Our aim is to define an embedding $f:T\rightarrow G$ unless $G$ and $T$ are as in one of the cases enumerated in the statement of the lemma.

\vspace{3mm}

CASE 1. $T$ is a double star.  

\vspace{2mm}

Let $T$ be a double star with vertices $x,y$ having degree greater than 1, and $p$ leaves are adjacent to $x$ and $q$ leaves are adjacent to $y$. If in $G$ there are two adjacent vertices $u$ and $v$ such that they share at most $t-6$ neighbors, i.e.\ $u$ has $r\ge 2$ neighbors nonadjacent to $v$ (besides $v$), then we can define $f$ as follows. Observe that there are at least $t-2$ vertices adjacent to $u$ or $v$ (besides $u$ and $v$). First we let $f(x)=u$ and $f(y)=v$. Then we pick $\min\{r,p\}$ neighbors of $u$ that are not adjacent to $v$, and some additional vertices from the common neighborhood of $u$ and $v$ to obtain $p$ vertices and let them be the $f$-images of the leaves adjacent to $x$. Then $v$ has at least $t-2-p= q$  neighbors that have not yet appeared as $f$-images, thus we can define $f$ on the leaves adjacent to $y$ by letting these unused neighbors of $v$ to be the $f$-images. The definition of $f$ is complete.

Therefore, we can assume that the neighborhoods of any two adjacent vertices $u$ and $v$ of $G$ differ in at most one vertex (besides $u$ and $v$). Let $u$ be an arbitrary vertex and $A$ be the set of its neighbors. Observe that $|A|=t-3$ is odd, thus $|A|\ge 3$. Since $A$ and $u$ do not induce a clique, there is a vertex $v\in A$ that is adjacent to a $w\not\in A$.

We claim that $w$ is adjacent to every vertex of $A$. %\vm{we know, since we are in this subcase} 
By the assumption on the number of common neighbors of adjacent vertices, we know that all the other neighbors of $v$ are in $A\cup \{u\}$. In particular, $v$ is adjacent to every vertex of $A$ except for one, $v'$. As the neighborhoods of $v$ and $w$ differ only in $u$ (and in a vertex adjacent to $w$ but not to $v$), every other neighbor of $v$ is adjacent to $w$. In particular, $w$ is adjacent to a third vertex of $A$, $v''$. Observe that $v''$ is adjacent to $v'$, otherwise the neighborhood of $v'$ and $u$ would differ by at least two vertices. But then we can repeat the same argument with $v''$ in place of $v$, to obtain that its neighbor outside $A\cup \{u\}$ (which is $w$) is adjacent to all the neighbors of $v''$ inside $A$, which include $v'$.

Therefore, every vertex in $A$ is adjacent to $u$, $w$ and $t-5$ vertices in $A$. Hence $G[A]$ is a $(t-5)$-regular graph on $t-3$ vertices, a contradiction as both $t-5$ and $t-3$ are odd. This finishes the proof in case $T$ is a double-star. %\vm{contradicting to Lemma 2.2.? d ezt ide kéne írni.}

\vskip 0.3truecm

CASE 2. $T$ is not a double-star and $G$ is a complete multipartite graph.

\vskip 0.2truecm

CASE 2.1. $T$ is an almost-star.

\vskip 0.2truecm

If $G$ is bipartite, then it is a $K_{t-3,t-3}$ that does not contain any almost-star as $t-2$ vertices should belong to the same part. Therefore, we can assume that $G$ has more than two parts. It is obvious that the regularity implies each part has the same size $(t-3)/k$ for some $k$ and then $G$ is complete $(k+1)$-partite. Let $x$ and $y$ be the two vertices in the same class of the 2-coloring of $T$. For $1\le j \le k+1$ and $1\le i\le (t-3)/k$, let $v^j_i$ denote the $i$th vertex of the $j$th part. If $T\neq A_t$, we let $f(x)=v^1_1$  and $f(y)=v^2_1$ and for the common neighbor $z$ of $x$ and $y$ we let $f(z)=v^3_1$. Let $x_1,x_2,\dots, x_p$ be the leaves adjacent to $x$ and let $y_1,y_2,\dots, y_q$ be the leaves adjacent to $y$. Clearly, $t-3=p+q$ holds and $p=1$ or $q=1$ is equivalent to $T=A_t$. We write $p'=\min\{p,(t-3)/k-1\}$ and $q'=\min\{q,(t-1)/k-1\}$. For $1\le i \le p'$ we let $f(x_i)=v^2_{i+1}$ and for $1\le j \le q'$ we let $f(y_j)=v^1_{j+1}$. Finally, we try to use vertices $v^j_i$ with $j\ge 3$ as $f$-images of those vertices of $T$ for which $f$ has not yet been defined (apart from $v^3_1=f(z)$). So the number of available vertices of $G$ is $(k-1)(t-3)/k-1$ and the number of vertices of $T$ still to be embedded is $(p-p')+(q-q')=t-3-p'-q'$. Observe that as $k\ge 2$, we must have $\max\{p',q'\}\ge (t-3)/k-1$ and as the parts have size at least three, we must have $\min\{p',q'\}\ge 2$. Therefore, the number of vertices of $T$ still to be embedded is $t-3-p'-q'\le (k-1)(t-3)/k+1-2$. So it is indeed possible to define $f$.

If $T=A_t$, then the above embedding does not work, as then $\min\{p',q'\}=1$. Observe that one of $x$ and $y$, say $x$, has degree $t-3$ in $T$. Therefore, if $f$ is an embedding of $T$ into $G$, then $f(x_1),f(x_2),\dots,f(x_{t-3})$ cover all vertices of $G$ that are not in the same part as $f(x)$. That means vertices not adjacent to $x$ have to be in the same part, but there is an edge between them, so $f$ cannot exist, as stated.

\vskip 0.2truecm

CASE 2.2. $T$ is not an almost-star. 

\vspace{2mm}

As $T$ is not a double-star, it contains a $P_5$ with vertices $u,v,w,z,y$ in this order such that $u$ is a leaf. We will show that there is an embedding  $f:T\rightarrow G$ unless $G$ is among the possibilities listed in the lemma.
 If $G$ is bipartite, it is $K_{t-3,t-3}$, which contains every tree $T$ with both color classes of size at most $t-3$, i.e.\ every tree $T$ that is not an almost-star. 
Therefore, $G$ has more than two parts and we will keep the notation $v^j_i$ to denote the $i$th vertex of the $j$th part. The regularity of $G$ implies each part has the same size. 
Odd regularity implies we have an even number of parts (thus at least 4), and their size is odd. If the size of each part is 1, we have a clique and $K_{t-2}$ is listed among the possibilities in the statement of the lemma. Thus we can assume that each part of $G$ has size at least 3.

If $v$ has another leaf neighbor $u'$, we first let $f(v)=v^1_1$, $f(z)=v^1_2$ , and $f(w)=v^2_1$, $f(y)=v^2_2$. Then we fix an ordering $s_1,s_2,\dots, s_{t-6}$ of the vertices of $V(T)\setminus \{u,u',v,z,w,y\}$ such that $T[v,z,w,y,s_1,\dots, s_j]$ is a tree for any $1\le j \le t-6$, i.e.\ $s_j$ has exactly one neighbor in $\{v,z,w,y,s_1,\dots,s_{j-1}\}$. We then define $f(s_1),f(s_2),\dots,f(s_{t-6})$ one by one in this order, as follows. When we define $f(s_j)$, let $x$ be the unique neighbor of $s_j$ in $v,z,w,y,s_1,\dots,s_{j-1}$; then we let $f(s_j)$ to be a neighbor of $f(x)$ preferably in the first part, and arbitrarily if that is not possible (either because $f(x)$ is in the first part, or because all vertices of the first part are already $f$-images). We can do that, as $f(x)$ has degree $t-3$ in $G$, and even for $j=t-6$, only $t-3$ vertices of $T$ have already been embedded, including $x$. Thus $f(x)$ has a neighbor that is not an $f$-image, that we can use.
If we did not embed any other vertex into the first part (besides $v$ and $z$), then every vertex of $T$ is incident to $v$ or $z$, thus $T$ is an almost-star. Otherwise, we will be able to define $f(u)$ and $f(u')$. Indeed, $f(v)$ has degree $t-3$ and out of the already embedded $t-2$ vertices, at least three are in the first part, thus $f(v)$ has at least two unused neighbors, finishing the definition of $f$ in this case.

If $u$ is the only leaf vertex adjacent to $v$, we will pick $u$ and another leaf $u'$ with neighbor $v'\neq v$. Before describing how to pick them, we describe how we will use them. Let $T'$ be the tree obtained by deleting $u$ and $u'$ from $T$. We will define $f$ on the vertices of $T'$ first, in one of the following two ways: either $f(v)$ and $f(v')$ are in the first part together with a third vertex, or $f(v)$ and $f(v')$ are in the first and second part of $G$, at least three vertices are embedded into the first part, and at least two vertices are embedded into the second part. Observe that if we partially define $f$ on some subtree $T'' \subseteq T'$ where $f(v)$ and $f(v')$ already satisfy the above properties, than we can greedily extend $f$ to $T'$, as the minimum degree is $|V(T')|-1$.

Let us describe first how we define $f(u)$ and $f(u')$ once $f$ is defined on $T'$. In the first case ($(f(v),f(v')$ are both in the first part), we can define $f(u)$ and $f(u')$, as both $f(v)$ and $f(v')$ have $t-3$ neighbors in $G$, and there are at most $t-5$ vertices of $T'$ embedded in their neighborhood so far.
In the other case, we can define $f(u')$, as $f(v')$ has $t-3$ neighbors (in $G$), and we had embedded $t-2$ vertices, but two of them into the second part. Similarly, afterwards we can embed $f(u)$, as $f(v)$ has degree $t-3$ in $G$ and out of the already embedded $t-1$ vertices, at least three are in the first part, thus $f(v)$ has an unused neighbor, finishing the definition of $f$ in this case.

%If $w$ has a leaf neighbor, we embed $v$ and $y$ into the first part, while $w$ and $z$ in the second part. Then proceed as in the previous case, we embed the other vertices greedily such that we embed one in the first part if possible. This way if no other vertex is embedded in the first part, $T$ is a almost-star, and otherwise we are done.

%If $z$ has a leaf neighbor, we embed $v$ and $y$ into the first part, $z$ into the second part and $w$ into the third part. Then again, we embed the remaining vertices greedily such that we embed one in the first and then one in the second part if possible. 
%\pb EZT HOGY? $w$ ES $z$ SZOMSZEDOK!

%If none of $w$ or $z$ has a leaf neighbor, 
Let us return to defining and embedding $T'$. We extend the path $uvwzy$ to a maximal path $a_1a_2\dots a_ma_{m+1}$ of $T$; let $u'=a_{m+1}$, thus $u'$ is a leaf. We define $f$ on the non-leaf vertices $a_2=v,a_3=w,a_4=z,a_5=y,\dots,a_m$ of this path.

If $6\le m \le 2n/k+1$, then for $2\le h \le m$ we let $f(a_h)=v^1_{h/2}$ if $h$ is even, and $f(a_h)=v^2_{(h-1)/2}$ if $h$ is odd. In this case, $m\ge 6$ implies that we have defined $f$ for at least five vertices, and we have found the desired embedding of $T'$ and we are done. 

If $m=4$ or $m=5$ (i.e.\ the last non-leaf vertex of the path $a_m$ is $z$ or $y$), then again we define the $f$-image of $a_2,\dots,a_m$ alternately from the first and second part, and then we define $f$ on the remaining vertices greedily, in such a way that we embed them into the first part if possible. If we did not embed any other vertex into the first part, then each vertex is adjacent to $a_2$ or $a_4$, thus $T$ is an almost-star, in any other cases we get the desired embedding $f$ on $T'$.

Finally, if the whole path does not fit into the first and second parts, i.e.\ if $m-1 >2n/k$, then we embed only  $a_2,a_3, \dots, a_{2n/k}$ into the first and second part in an alternating way. Then we embed the remaining part of the path till $a_m$ alternately into the third and fourth part of $G$, then the fifth and sixth part, and so on. This way we could use all the vertices of $G$, which is obviously enough. Then we embed $a_m$ into the second part, and we obtain the desired partial embedding of $T'$.

%depending on the parity of the length of this cycle, we embed either $v$ and $z$ to the first part and $w$ and $y$ to the second part, or $v$ and $y$ to the first part and $w$, $z$ to the second part. In both cases we alternately place the next vertices into the first and second part. Afterwards, we proceed as in the earlier cases: we embed the other vertices, except for $u$ and $x$ greedily, in an order such that we always extend a tree. The only restriction is that we embed a vertex into the first part if possible. As in the earlier cases, if no other vertices are embedded in the first part, then $T$ is an almost-star, and otherwise we can extend the embedding greedily first by $x$, then by $u$. 

\vskip 0.3truecm

CASE 3. $G$ is not a complete multipartite graph. Recall that $T$ contains a $P_5$ with vertices $u,v,w,z,y$ in this order such that $u$ is a leaf.

\vskip 0.2truecm

Our aim is to define an embedding $f:T\rightarrow G$ vertex by vertex in a specific order $s_1,s_2,\dots,s_t$. To define the embedding we will need the following claim.

\begin{claim}\label{elso} If $t\ge 7$, then $G$ contains a $P_4$ $abcd$ such that $ac,ad\not\in E(G)$, and 
%either $bd\not\in E(G)$, or 
there is no $K_{t-6}$ consisting of other vertices such that all of its vertices are adjacent to all of $a,b,c,d$.
\end{claim}

\begin{proof}
Observe that if for any two nonadjacent vertices $\alpha,\beta \in V(G)$ we have $N(\alpha)=N(\beta)$, then being nonadjacent is an equivalence relation, i.e.\ $G$ is complete multipartite, contradicting to the assumption of CASE 3.

Also, if for any two nonadjacent vertices $\alpha,\beta \in V(G)$ we have that $N(\alpha)\neq N(\beta)$ implies $N(\alpha)\cap N(\beta)=\emptyset$, then being adjacent or equal is an equivalence relation, and thus $G$ is a vertex-disjoint union of cliques, and we are done.

Otherwise, we can find two nonadjacent vertices $\alpha$ and $\beta$ such that $C:=N(\alpha)\cap N(\beta)$, $A:=N(\alpha)\setminus N(\beta)$, and $B:=N(\beta)\setminus N(\alpha)$ are all non-empty, as $G$ is regular. 
%If there are $u\in A$ and $v\in B\cup C$ such that $uv\not\in E(G)$, they form the first desired configuration with $x$ and $y$. Otherwise we pick arbitrary 
Let us fix $\gamma\in C$, $\delta\in B$,  $\epsilon \in A$ and consider the $P_4$ $\alpha \gamma \beta \delta$. If $M:=N(\alpha)\cap N(\beta)\cap N(\gamma)\cap N(\delta)$ does not contain a clique of size $t-6$, then we can take $a=\alpha$, $b=\gamma$, $c=\beta$, and $d=\delta$, finishing the proof. 

Observe that if $K\subseteq M$ is a clique of size $t-6$, then for any $\kappa \in V(K)$ we have $N(\kappa) = (K\setminus \{\kappa\}) \cup \{\alpha,\beta,\gamma,\delta\}$. Indeed, as $\kappa$ is adjacent to all these vertices and their number is $t-3$, such $\kappa$ is not adjacent to $\epsilon$. Therefore, $N:=N(\alpha)\cap N(\beta)\cap N(\gamma)\cap N(\epsilon)$ is disjoint from $K\cup \{\gamma,\delta\}$. As $K\cup \{\gamma,\delta\}\subset N(\beta)$, we obtain $|N|\le 1$ and $a=\beta, b=\gamma, c=\alpha, d=\epsilon$ is a good choice if $t\ge 8$. Finally, if $t=7$ and $K=\{\kappa\}$, $N=\{\kappa'\}$, then $N(\kappa)=\{\alpha,\beta,\gamma,\delta\}$, $N(\gamma)=\{\alpha,\beta,\kappa,\kappa'\}$, $N(\beta)=\{\gamma,\delta,\kappa,\kappa'\}$, thus $a=\beta$, $b=\kappa$, $c=\alpha$, $d=\epsilon$ is a good choice.
%If there is a $K_{t-6}$ as described in the statement, it consists of elements of $A\cup B\cup C$. Notice that the clique and the path contains all the neighbors of every vertex in the clique. In particular, an element of $A$ in the clique means every element of $B\cup C\setminus \{v\}$ is in the clique, which in turn means every element in $A\setminus \{u\}$ is in the clique. Similarly, an element of $B\cup C \setminus \{v\}$ in the clique forces every element of $A\cup B\cup C\setminus \{u,v\}$ to be in the clique. But $|A\cup B\cup C\setminus \{u,v\}|\ge t-2$, thus the vertices of the clique have degree more than $t-3$, a contradiction.
\end{proof}

Let $T_0$ be the tree obtained from $T$ by removing its leaves. As $T$ contains a path on 5 vertices, $T_0$ contains a path on 3 vertices.  Suppose $T_0$ contains a leaf $v$ such that $v$ is adjacent to two leaves $u_1,u_2$ of $T$. Then there exists a path $vwz$ in $T_0$ and thus a path $vwzy$ in $T$. Let us fix an ordering $s_1,s_2,\dots,s_t$ of the vertices of $T$ such that $s_1=v$, $s_2=w$, $s_3=z$, $s_4=y$, $s_{t-1}=u_1,s_{t-2}=u_2$ and for any $j\le t$ the subgraph $T[s_1,s_2,\dots,s_j]$ is connected, i.e.\ every $s_j$ ($j\ge 2$) is adjacent to exactly one $s_i$ with $i<j$. We let $f(s_1)=a$, $f(s_2)=b$, $f(s_3)=c$, $f(s_4)=d$ with $a,b,c,d$ given by Claim \ref{elso}. As earlier, we extend the definition of $f$ vertex by vertex; that is, when defining $f(s_j)$, we take a neighbor of $f(s_{i})$ where $s_i$ is the unique neighbor of $s_j$ in $T$. This is clearly doable as long as $j\le t-2$. Indeed, at that moment $t-3$ vertices are embedded and $f(s_i)$ is not a neighbor of itself, so at least one neighbor is available. As $s_{t-1},s_t$ are the leaves adjacent to $s_1$, the only question is whether $f(s_1)=a$ still has two available neighbors. It does, as the number of already embedded vertices is $t-2$ and $a$ is not adjacent to itself, $f(s_3)=c$, and $f(s_4)=d$.

Suppose next that all leaves of $T_0$ are adjacent to exactly one leaf of $T$. Let $v_1v_2\dots v_k$ be a longest path in $T_0$ and let $u_1$ and $u_k$ denote the leaves of $T$ adjacent to $v_1$ and $v_k$, respectively. Let us fix a numbering $s_1,s_2,\dots,s_t$ of the vertices of $T$ as follows.

    If $k\ge 5$, $s_j=v_j$ for $j=1,2,3, 4$, $s_t=u_1$, $s_{t-1}=u_k, s_{t-2}=v_k$  and for any $j\le t$ the subgraph $T[s_1,s_2,\dots,s_j]$ is connected, i.e.\ every $s_j$ ($j\ge 2$) is adjacent to exactly one $s_i$ with $i<j$. (Note that such a numbering exists because $v_k$ is adjacent to only one leaf in $T_0$, namely $u_k$.) We let $f(s_1)=a$, $f(s_2)=b$, $f(s_3)=c$, $f(s_4)=d$ with $a,b,c,d$ given by Claim \ref{elso}, and then consider a maximal clique $K$ in $G$ with all vertices of $K$ adjacent to all of $a,b,c$, and $d$. By Claim \ref{elso}, we have $m:=|M|\le t-7$. We let $f$ map the vertices $s_5,s_6,\dots,s_{m+4}$ to vertices of $M$. Then for $m+5\le j \le t-2$, we can define $f(s_j)$ to be an arbitrary available neighbor of $f(s_i)$, where $s_i$ is the single vertex adjacent to $s_j$ with $i<j$. Observe that $m\le t-7$ implies that $f(s_{t-2})\notin M$, therefore $f(s_{t-2})$ is not adjacent to at least one $f(s_i)$ with $i<t-2$. Thus $f(s_{t-2})$ still has an available neighbor that can serve as $f(s_{t-1})$. Finally, $f(s_t)$  should be an available neighbor of $f(s_1)$ and there exists such a neighbor as, by Claim \ref{elso}, $f(s_1)$ is not adjacent to $f(s_3)$ and $f(s_4)$.

    If $k=4$, then let us define the numbering  as $s_j=v_j$ for $j=1,2,3,4$, $s_t=u_1$, $s_{t-1}=u_k$,  and for any $j\le t$ the subgraph $T[s_1,s_2,\dots,s_j]$ is connected, i.e.\ every $s_j$ ($j\ge 2$) is adjacent to exactly one $s_i$ with $i<j$. Then the strategy of the case $k\ge 5$ works in this case as well, because $s_{t-1}$ is adjacent to $s_4$ and $f(s_4)=d$ is not adjacent to $f(s_1)=a$, so $f(s_4)$ will have an available neighbor when embedding $s_{t-1}$. Also, $f(s_1)=a$ is not adjacent to $c$ and $d$, therefore it will have an available neighbor when embedding $s_t$.

    Finally, if $k=3$, then $T_0$ is a star with, say, $l$ leaves. As any leaf of $T_0$ is adjacent to exactly one leaf of $T$, we obtain that $T$ has $2l+1$ vertices, contradicting our assumption that $t$ is even. %$v,v_{1,1},v_{1,2},v_{2,1},v_{2,2,},\dots,v_{l,1},v_{l,2}$ with $vv_{i,1}v_{i,2}$ forming a $P_3$ for every $i=1,2,\dots,l$. As $t\ge 6$ we obtain $\le 3$. Let us define the numbering as $s_1=v_{1,1},s_2=v,s_3=v_{2,1},s_4=v_{2,2},\dots, s_{2l-1}=v_{l,1},s_{2l}=v_{l,2},s_{2l+1}=v_{1,2}$. As $s_{2l}$ is connected to $s_{2l-1}$ and $s_{2l+1}$ is connected to $s_1$ the strategy of the case $k\ge 5$ works.
\end{proof}

\begin{proof}[Proof of Theorem \ref{treethm}] The upper bound $t-2$ is immediate from the well-known fact that a graph $G$ with minimum degree at least $t-1$ contains $T$. It can be proved by greedily embedding $T$ into $G$. It is sharp in case $t-1$ divides $n$, as shown by the vertex-disjoint union of $n/(t-1)$ copies of $K_{t-1}$. It is also sharp in case $T$ is a star and $t$ or $n$ is even, as shown by any $(t-2)$-regular graph.

In case $T$ is a star, and both $n$ and $t$ are odd, there is no $(t-2)$-regular graph on $n$ vertices, giving the upper bound $t-3$, which is sharp, as shown by any $(t-3)$-regular graph. If $T$ is not a star, then by Lemma \ref{nemklikk} the only $(t-2)$-regular $T$-free graphs consist of copies of $K_{t-1}$, thus $t-1$ divides $n$, giving the upper bound $t-3$ in every other case.

If $t$ is odd, then let $H$ be obtained by deleting a perfect matching from $K_{t-1}$. Then $H$ is $(t-3)$-regular, and so is $K_{t-2}$. If $n$ can be written as $a(t-1)+b(t-2)$ (every $n$ that is large enough can be written this way for some nonnegative integers $a$ and $b$), then $a$ copies of $H$ and $b$ copies of $K_{t-2}$ form a $(t-3)$-regular $T$-free graph on $n$ vertices. If $t-2$ divides $n$, then $n/(t-2)$ copies of $K_{t-2}$ form a $(t-3)$-regular $T$-free graph. If $T$ is an almost-star, and $t$ is even, then we write $n$ as $a(t-2)+b(2t-6)$ (we can do that if $n$ is large enough and even). Then $a$ copies of $K_{t-2}$ and $b$ copies of $K_{t-3,t-3}$ form a $T$-free graph on $n$ vertices (note that $K_{t-3,t-3}$ is $T$-free as in the proper 2-coloring of $T$, one class has size at least $t-2$). These constructions show that $t-3$ is a lower bound on $\regex(n,T)$ in all the claimed cases.

Finally, assume that the pair of $T$ and $n$ does not satisfy any of the above properties, thus we claim $\regex(n,T)=t-4$. We have already seen $\regex(n,T)<t-2$. Assume $\regex(n,T)=t-3$, then by the above $t$ is even, and by Lemma \ref{tminusthree} there are three possibilities. In the first and second cases $T$ is an almost-star, and $G$ is the union of vertex-disjoint copies of $K_{t-2}$ and some complete $(k+1)$-partite graphs for some $k$ that divides $t-3$. Then $k$ is odd, thus all of the above components have an even number of vertices, thus $n$ is even, and the pair of $T$ and $n$ was listed earlier. In the third case $t-2$ divides $n$, and again the pair of $T$ and $n$ was listed earlier. This proves the upper bound $t-4$ for the remaining cases. 

Finally, we prove the matching lower bound.
Let $H'$ be obtained by deleting a perfect matching from $K_{t-2}$ (recall that $t$ is even). Then $H'$ is $(t-4)$-regular, and so is $K_{t-3}$. Now we write $n$ in the form $a(t-2)+b(t-3)$, and we are done with the lower bound as in the previous cases, finishing the proof.
\end{proof}

\section*{Acknowledgement} 
Research was supported by the National Research, Development and Innovation Office -- NKFIH under the grants FK 132060, KH130371 and SNN 129364. Research of Vizer was supported by the J\'anos Bolyai Research Fellowship and the New National Excellence Program under the grant number \'UNKP-19-4-BME-287.

\end{document}